\newtheorem{theorem}{Theorem}
\theoremstyle{plain}
\newtheorem{case}{Case}
\newtheorem{claim}{Claim}
\newtheorem{conjecture}{Conjecture}
\newtheorem{problem}{Problem}
\newtheorem{dupa}{Subcase}
\numberwithin{equation}{section}
\begin{document}
	\title{Extensions and reductions of square-free words}

	\author{Micha\l{} D\k{E}bski}
	\address{Faculty of Mathematics and Information Science, Warsaw University
		of Technology, 00-662 Warsaw, Poland}
	\email{m.debski@mini.pw.edu.pl}
	
	\author{Jaros\l aw Grytczuk}
	\address{Faculty of Mathematics and Information Science, Warsaw University
		of Technology, 00-662 Warsaw, Poland}
	\email{j.grytczuk@mini.pw.edu.pl}
	
	\author{Bart\l omiej Pawlik}
	\address{Institute of Mathematics, Silesian University of Technology, 44-100 Gliwice, Poland}
	\email{bpawlik@polsl.pl}

	\begin{abstract}
A word is \emph{square-free} if it does not contain a nonempty word of the form $XX$ as a factor. A famous 1906 result of Thue asserts that there exist arbitrarily long square-free words over a $3$-letter alphabet. We study square-free words with additional properties involving single-letter deletions and extensions of words.

A square-free word is \emph{steady} if it remains square-free after deletion of any single letter. We prove that there exist infinitely many steady words over a $4$-letter alphabet. We also demonstrate that one may construct steady words of any length by picking letters from arbitrary alphabets of size $7$ assigned to the positions of the constructed word. We conjecture that both bounds can be lowered to $4$, which is best possible.

In the opposite direction, we consider square-free words that remain square-free after insertion of a single (suitably chosen) letter at every possible position in the word. We call them \emph{bifurcate}. We prove a somewhat surprising fact, that over a fixed alphabet with at least three letters, every steady word is bifurcate. We also consider families of bifurcate words possessing a natural tree structure. In particular, we prove that there exists an infinite tree of doubly infinite bifurcate words over alphabet of size $12$.

	\end{abstract}
	
	\maketitle
	
	\section{Introduction}
	A \emph{square} is a nonempty word of the form $XX$. A word $W$ \emph{contains} a square if it can be written as $W=PXXS$, with $X$ nonempty, while $P$ and $S$ possibly empty. Otherwise, $W$ is called \emph{square-free}. A famous theorem of Thue \cite{Thue} (see \cite{BerstelThue}) asserts that there exist infinitely many square-free words over a $3$-letter alphabet. This result initiated combinatorics on words --- the whole branch of mathematics, abundant in many deep results, exciting problems, and unexpected connections to diverse areas of science (see \cite{AlloucheShallit,BeanEM,BerstelPerrin,Currie TCS,Lothaire,LothaireAlgebraic}).
	
	In this paper we study square-free words with additional properties involving two natural operations, a single-letter extension and a single-letter deletion, defined as follows.
	
	Let $W$ be a word of length $n$ over a fixed alphabet $\mathcal{A}$. For $i=0,1,\dots,n$, let $P_i(W)$ and $S_i(W)$ denote the prefix and the suffix of $W$ of length $i$, respectively. Notice that the~words $P_0(W)$ and $S_0(W)$ are empty. In particular, we have $W=P_i(W)S_{n-i}(W)$ for every $i=0,1,\dots,n$. An \emph{extension} of $W$ at \emph{position} $i$ is a word of the form $U=P_i(W)\mathtt{x}S_{n-i}(W)$, where $\mathtt{x}$ is any letter from $\mathcal{A}$. In this case we also say that $W$ is a \emph{reduction} of $U$.
	
	A square-free word $W$ is \emph{extremal} over $\mathcal{A}$ if there is no square-free extension of $W$. Grytczuk, Kordulewski and Niewiadomski \cite{GrytczukKN} proved that there exist infinitely many extremal ternary words, the shortest one being $$\mathtt{1231213231232123121323123}.$$They also conjectured that there are no extremal words over an alphabet of size $4$. Recently, Hong and Zhang \cite{HongZhang} proved that this is true for an alphabet of size $17$.
	
	Harju \cite{Harju} introduced a complementary concept of \emph{irreducible} words. These are square-free words whose any non-trivial reduction (the deleted letter is neither the first one nor the~last one in the word) contains a square. He proved that for any $n\neq4,5,7,12$ there exists a~ternary irreducible word of length $n$.
	
	In this article we consider square-free words with the very opposite properties, defined as follows: a square-free word is \emph{steady} if it remains square-free after deleting any single letter. For instance, the word $\mathtt{1231}$ is steady since all of its four reductions $$\mathtt{231,131,121,231}$$are square-free, while $\mathtt{1213}$ is not steady since one of its reductions is $\mathtt{113}$. Generally, every ternary square-free word of length at least $6$ must contain a factor of the form $\mathtt{xyx}$, and therefore is not steady. However, there are steady words of any given length over larger alphabets, as we prove in Theorem \ref{Theorem Steady 4}. We also consider a general variant of such statement in the following \emph{list} setting.
	
	\begin{conjecture}\label{Conjecture 4-List Steady}
		Let $n$ be a positive integer and let $\mathcal{A}_1,\mathcal{A}_2,\ldots,\mathcal{A}_n$ be a sequence of alphabets of size $4$. Then there exists a steady word $W=w_1w_2\cdots w_n$, with $w_i\in\mathcal{A}_i$ for every $i=1,2,\ldots,n$.
	\end{conjecture}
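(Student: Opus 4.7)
The plan is to attempt an entropy-compression argument along the lines of the list version of Thue's theorem for $4$ letters due to Grytczuk, Kozik and Micek. One builds $W$ from left to right, drawing $w_i$ uniformly at random from $\mathcal{A}_i$, and whenever the current prefix either contains a square or has some single-letter reduction that contains a square, one backtracks by deleting a prescribed suffix and retries, keeping a log of what went wrong. The standard counting argument then reduces Conjecture~\ref{Conjecture 4-List Steady} to showing that this log can be encoded sufficiently efficiently relative to the $4^n$ possible random tapes, so that many tapes produce a successful run.

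The technical heart is to catalogue the bad events that can fire when extending $w_1\cdots w_{i-1}$ by a letter $w_i$. Three types arise: (a)~a square ending at position $i$ in the extended word; (b)~for some $j<i$, a square ending at position $i-1$ in the reduction obtained by deleting $w_j$; and (c)~a square ending at position $i-1$ in the reduction obtained by deleting $w_i$ itself. Events (b) and (c) can be reformulated as occurrences in $W$ of an \emph{almost-square} $XY$ in which $X$ and $Y$ have equal length and differ in exactly one position, so steadiness is just square-freeness together with the absence of certain almost-square factors. A failure log entry at step $i$ then records the event type, the length of the retracted suffix, and, in types (b) and (c), the position of the deletion inside the witness, which is just enough to reconstruct the random tape from the log.

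The principal obstacle is to make the entropy estimate tight enough for list size $4$. Ordinary squares contribute the familiar generating-function term used by Grytczuk--Kozik--Micek, which is already on the edge of what $4$ letters can support. Almost-squares of length $2\ell$ naively carry an extra factor of order $\ell$ (the possible positions of the distinguishing letter), and summing these contributions pushes the entropy budget well past the threshold --- this is essentially why the authors are forced to the weaker bound of $7$ letters. Closing the gap appears to require an encoding that genuinely exploits the tight overlap between an almost-square and the surrounding square-free structure: its two halves share $2\ell-1$ positions and thus carry much less independent information than their length suggests. I expect this is exactly the place where a new idea is needed, perhaps combining entropy compression with an explicit Thue-like morphism that pre-packages most single deletions as safe, so that only a sparse family of ``residual'' almost-squares has to be controlled probabilistically.
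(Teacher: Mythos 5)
What you have written is a research programme, not a proof, and you say so yourself: the decisive quantitative step is missing. The statement you are addressing is precisely Conjecture~\ref{Conjecture 4-List Steady}, which the paper leaves open; the authors prove only the weaker Theorem~\ref{Theorem 7-list Steady} for lists of size~$7$. Your catalogue of bad events is sound and essentially mirrors what the paper does there --- steadiness fails for a prefix exactly when some suffix becomes a square after one deletion, which is your ``almost-square'' $XY$ with $|X|=|Y|$ differing in one position, and the paper's sets $\mathbb{D}_j$ in Claim~\ref{claim_failsnumber} enumerate exactly these events, parametrised by the length of the witness and the position of the deleted letter. The difference is only in bookkeeping: the paper uses Rosenfeld's direct counting ($C_{n+1}=7C_n-F_{n+1}$ together with the induction $C_n\geqslant 4C_{n-1}$) rather than entropy compression, but both methods live or die by the same sum, and in both the almost-squares of half-length $\ell$ contribute a term of order $\ell\cdot|\mathcal{A}|\cdot C_{n-\ell}$ because one must pay for the position of the deleted letter and for the free choice of that letter from its list. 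That linear-in-$\ell$ factor is what forces the alphabet size up to $7$ in the paper, and your proposal offers no mechanism that actually removes it --- the remark that the two halves of an almost-square share $2\ell-1$ positions is already exploited (it is why one only pays for a position and a letter, not for a whole half), and the suggestion of combining the counting with a Thue-like morphism is not developed to the point where any estimate could be checked.

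So the gap is concrete: no inequality of the form $C_{n+1}\geqslant \lambda C_n$ with $\lambda>0$ (or the entropy-compression analogue, a convergent weighting of the bad-event series) is established for list size~$4$, and with the encoding you describe the series provably does not converge at $4$, since even the ordinary-square term alone sits at the known threshold for list square-freeness and the almost-square terms add a strictly positive contribution on top. As it stands the proposal proves nothing beyond what Theorem~\ref{Theorem 7-list Steady} already gives, and the conjecture remains open.
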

	
	In Theorem \ref{Theorem 7-list Steady} we prove that the statement of the conjecture holds for alphabets with at least $7$ letters. Let us mention that analogous conjecture for pure square-free words (with alphabets of size $3$), stated in \cite{Grytczuk}, is still open. Currently the best general result confirms it for alphabets of size $4$ (see \cite{GrytczukKM,GrytczukPZ,Rosenfeld1} for three different proofs). Recently Rosenfeld \cite{Rosenfeld2} proved that it holds when the union of all alphabets is a $4$-element set.
	
	We also consider square-free words defined similarly with respect to extensions of words. A square-free word is \emph{bifurcate} over a fixed alphabet $\mathcal{A}$ if it has at least one square-free extension at \emph{every} position. For instance, the word $\mathtt{1231}$ is bifurcate over $\{1,2,3\}$ and here are its five square-free extensions: $$\mathtt{\underline{2}1231,1\underline{3}231,12\underline{1}31,123\underline{2}1,1231\underline{2}}.$$
	Thus the word $\mathtt{1231}$ is both steady and bifurcate. This not a coincidence --- for an alphabet with at least three letters, every steady word is bifurcate, as we prove in Theorem \ref{Theorem Steady-Bifurcate}.
	
	Clearly, ternary bifurcate words cannot be too long. Indeed, every ternary square-free word of length at least $6$ contains a factor of the form $\mathtt{xyxz}$ (or its reversal). On the other hand, any ternary square-free word is bifurcate over a $4$-letter alphabet. One may, however, inquire about the existence of an infinite \emph{chain} of bifurcate quaternary words.
	
	\begin{conjecture}\label{Conjecture 4-Bifurcate Chain}
		There exists an infinite sequence of quaternary bifurcate words $W_1,W_2,\ldots$ such that $W_{i+1}$ is a single-letter extension of $W_i$, for each $i=1,2,\ldots$.
	\end{conjecture}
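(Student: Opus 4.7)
The plan is to reduce Conjecture~\ref{Conjecture 4-Bifurcate Chain} to a question about an auxiliary tree and then tackle the reduced question either constructively or probabilistically. Let $T$ be the rooted tree whose vertices are the quaternary bifurcate words, with the empty word as root and $W'$ declared a child of $W$ whenever $W'$ is a bifurcate single-letter extension of $W$. Each vertex of length $n$ has at most $4(n+1)$ children, so $T$ has finite degree everywhere. By K\"onig's lemma, the conjecture is equivalent to the assertion that $T$ is infinite, which in turn is equivalent to the existence of arbitrarily long finite chains $W_1,\ldots,W_N$ of bifurcate quaternary words in which each $W_{i+1}$ is a single-letter extension of $W_i$.

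To produce such arbitrarily long chains I would try two complementary approaches. The first is explicit: take a bifurcate infinite quaternary word --- for example the image of a fixed point of a suitable morphism, or one harvested from the steady words built in Theorem~\ref{Theorem Steady 4}, which are bifurcate by Theorem~\ref{Theorem Steady-Bifurcate} --- and try to realise it as the limit of a chain by choosing, at each step, the order in which its letters are inserted so as to keep every intermediate word bifurcate. The second is an entropy-compression argument in the style of Moser--Tardos: start from a short bifurcate seed, repeatedly pick a uniformly random position and letter, accept the extension if the new word is bifurcate and backtrack otherwise, and attempt to show that the expected length of the process grows without bound.

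The principal obstacle is the highly non-local nature of the bifurcate property: inserting a single letter can destroy bifurcation at a position arbitrarily distant from the insertion site, because bifurcation asks that at \emph{every} position some letter yields a square-free extension, and its failure is the disjunction, over far-apart positions, of a global square condition. Unlike square-freeness itself, bifurcation cannot be certified by inspecting a bounded window, so neither the morphic construction nor the entropy-compression analysis can be argued purely locally. The hard step, therefore, is to isolate some stronger, more robust variant of bifurcation that is preserved under well-chosen single-letter extensions and that still admits sufficiently long words over a four-letter alphabet; absent such an invariant, the tree $T$ might terminate at a bounded depth, which is precisely why the statement is only conjectural.
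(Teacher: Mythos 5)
This statement is a \emph{conjecture} in the paper: the authors offer no proof of it, and your proposal does not supply one either. What you have written is a research plan, and its core step is explicitly missing. The reduction via K\"onig's lemma merely rephrases the problem, and even that rephrasing is not quite right as stated: the ``tree'' $T$ rooted at the empty word need not contain every quaternary bifurcate word, since a bifurcate word need not admit any bifurcate single-letter reduction, so $T$ being infinite is sufficient but not obviously equivalent to the conjecture; likewise, the existence of arbitrarily long finite chains (with varying starting words $W_1$) does not by itself yield an infinite chain --- K\"onig's lemma needs a single finitely-branching tree with arbitrarily long paths from one fixed root, i.e.\ you must first locate one word from which arbitrarily long chains emanate, and nothing in the proposal does that.

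Both of your proposed attack routes are left unexecuted. The remark that the steady words of Theorem~\ref{Theorem Steady 4} are bifurcate by Theorem~\ref{Theorem Steady-Bifurcate} gives you individual bifurcate words, but says nothing about whether they can be built up letter by letter through intermediate words that are \emph{all} bifurcate, which is the entire content of the conjecture; and the entropy-compression sketch is not accompanied by any analysis, while you yourself identify the obstruction that blocks it (bifurcation is not certified by a bounded window, so the local counting that drives such arguments is unavailable). Your closing paragraph is an accurate diagnosis of why the problem is hard, but it is a diagnosis, not a proof; as it stands the proposal establishes nothing beyond what the paper already states, namely that the question is open.
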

	
A much stronger property holds over alphabets of size at least $12$, namely there exists a~\emph{complete bifurcate tree} of bifurcate words (rooted at any single letter), in which every word of length $n$ has $n+1$ descendants, corresponding to the extensions at different positions, and each of them is again bifurcate, and the same is true for all of their descendant, and so on, ad infinitum. This curious fact follows easily from a result of K\"{u}ndgen and Pelsmajer on \emph{nonrepetitive} colorings of \emph{outerplanar} graphs, as noted in \cite{GrytczukSZ} in a different context of the~\emph{on-line Thue games}. We will recall this short argument for completeness. It seems plausible, however, that the actual number of letters needed for such an amazing property may be much smaller.

\begin{conjecture}\label{Conjecture 5-Bifurcate Tree}
	There exists a complete bifurcate tree over an alphabet of size $5$.
\end{conjecture}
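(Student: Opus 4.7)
The plan is to reformulate Conjecture~\ref{Conjecture 5-Bifurcate Tree} as a nonrepetitive coloring problem and then try to refine the $12$-color bound of K\"{u}ndgen and Pelsmajer by exploiting the specific structure that arises. First, I would encode the combinatorial structure of a complete bifurcate tree as a single infinite graph $G$. Its vertices are equivalence classes of pairs $(W,j)$, where $W$ is a node of the tree and $j$ is a position in $W$, under the natural identification that propagates positions from a parent word to each of its single-letter extensions. Two vertices are joined by an edge whenever they occur at consecutive positions in at least one word of the tree. Each word in the tree then corresponds to a path in $G$, so any nonrepetitive coloring of $G$ with $5$ colors yields a complete bifurcate tree over a $5$-letter alphabet.

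The graph $G$ is built from an initial path by repeated insertions of a new vertex adjacent to two consecutive old ones, hence it is outerplanar, and by the K\"{u}ndgen--Pelsmajer theorem it admits a nonrepetitive $12$-coloring; this is essentially the argument sketched in the excerpt. However, $G$ has much more structure than a generic outerplanar graph: every bounded face is a triangle, and the whole graph decomposes along the hierarchy of extensions into a tree of these triangles. I would pursue two parallel strategies. The first is to look for an improved nonrepetitive chromatic number bound specifically for this restricted class, following the inductive coloring ideas of K\"{u}ndgen--Pelsmajer but tracking forbidden colors along each branch much more carefully, in the hope of reducing $12$ to $5$. The second is to apply entropy compression in the style of Moser--Tardos to a random letter assignment grown vertex by vertex, using that each new vertex only creates squares running through the recently inserted portion of the word, so the dependency structure is local.

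The main obstacle is exactly this gap between $5$ and the currently known $12$: no proof technique is known that exploits the outerplanar-plus-triangle-tree structure tightly enough, and the target alphabet size is at the very boundary of what the probabilistic tools can hope to deliver. As a preliminary step I would verify the conjecture computationally for the first several depths of the tree in order to confirm feasibility with $5$ letters, and use the resulting valid partial colorings to suggest structural invariants that a theoretical proof should preserve. A particularly useful outcome would be an explicit iterated morphism on a $5$-letter alphabet that generates a bifurcate tree of unbounded depth, which would reduce the problem to a finite check.
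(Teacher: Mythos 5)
This statement is a conjecture that the paper explicitly leaves open; the paper only proves the weaker fact that a complete bifurcate tree exists over $12$ letters (via the K\"{u}ndgen--Pelsmajer theorem applied to the dyadic graph $D$) and that $4$ letters do not suffice. Your proposal does not close this gap either: after the (correct) reduction of the conjecture to a coloring problem on the graph $G$ of positions --- which is essentially the paper's own graph $D$ --- everything that follows is a list of strategies (sharpening the K\"{u}ndgen--Pelsmajer induction for this triangle-tree structure, entropy compression, computer search, an explicit morphism) with no actual argument that any of them reaches $5$ colors. Entropy-compression and local-lemma style arguments are exactly the tools behind the known bounds, and they are not known to get anywhere near $5$ even for trees-of-triangles; so as written this is a research plan, not a proof, and the central difficulty of the conjecture is untouched.

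One concrete technical caveat about your reduction: you ask for a nonrepetitive $5$-coloring of $G$ in which \emph{all} simple paths are square-free, but the conjecture only requires square-freeness of the words of the tree, which correspond to \emph{directed} (left-to-right) paths in $G$. The all-paths requirement is strictly stronger --- already the paper's final remarks point this out and formulate the right intermediate question (their Problem on the oriented graph $D^*$, where the answer is known to lie between $5$ and $12$). Since nonrepetitive chromatic numbers of undirected outerplanar-type graphs are typically well above $5$ (even trees need $4$, and no bound below $12$ is known for outerplanar graphs), insisting on a full nonrepetitive $5$-coloring of $G$ may be asking for something false even if the conjecture is true. Any serious attempt along your lines should therefore target the directed version $D^*$, or work directly with the tree of words, rather than the undirected coloring problem.
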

It is not hard to verify that the above conjecture is tight, as we demonstrate at the end of the following section.
\section{Results}
We shall present proofs of our results in the following subsections.

\subsection{Steady words over a $4$-letter alphabet}

For a rational $r\in[1,2]$, a \emph{fractional $r$-power} is a word $W=XP$, where $P$ is a prefix of $W$ of length $(r-1)|X|$. We say that $r$ is the \emph{exponent} of the word $W$. The famous Dejean's conjecture (now a theorem) states that the infimum of $r$ for which there exist infinite $n$-ary words without factors of exponent greater than $r$ is equal to
$$\left\{\begin{array}{ll}7/4&\mbox{ for }n=3,\\7/5&\mbox{ for }n=4,\\ n/(n-1)&\mbox{ for }n\neq3,4.\end{array}\right.$$ 
The case $n=2$ is a simple consequence of the classical theorem of Thue \cite{Thue}. The case $n=3$ was proved by Dejean \cite{Dejean1972} and the case $n=4$ by Pansiot \cite{Pansiot1984}. Cases up to $n=14$ were proved by Ollagnier \cite{Ollagnier1992} and Noori and Currie \cite{Noori2007}. Carpi \cite{Carpi2007} showed that statement holds for every $n\geq33$, while the remaining cases were proved by Currie and Rampersad \cite{Currie2011}.

\begin{theorem}\label{Theorem Steady 4}
	There exist arbitrarily long steady words over a $4$-letter alphabet.
\end{theorem}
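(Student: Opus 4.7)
The plan is to invoke Dejean's theorem in the Pansiot case (cited just above the theorem): there exists an infinite $4$-ary word $W^{\infty}$ in which every factor has exponent at most $7/5$. I will show that any such word is automatically steady, so its prefixes yield steady words of arbitrary length.

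Let $W$ be a finite $4$-ary word in which every factor has exponent at most $7/5$. Then $W$ is square-free (since $7/5<2$), and only squares produced by a single deletion have to be ruled out. Suppose, toward a contradiction, that deleting the letter at position $i$ of $W$ creates a square $XX$ with $|X|=k$. Lifting this square back to $W$ identifies a factor $Y=y_1 y_2\cdots y_{2k+1}$ of $W$ and an index $t\in\{1,\ldots,2k+1\}$ such that removing $y_t$ from $Y$ yields $XX$. If $t=1$ or $t=2k+1$, the square $XX$ is already a factor of $W$, contradicting square-freeness; so we may assume $2\leq t\leq 2k$.

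The core step is to read off the periodicity that the equality of the two halves of $XX$ imposes on $Y$, and thereby exhibit inside $Y$ a factor of exponent strictly greater than $7/5$. If $t=k+1$, comparison gives $y_j=y_{j+k+1}$ for $j=1,\ldots,k$, so $Y$ itself has period $k+1$ and length $2k+1$, yielding exponent $(2k+1)/(k+1)\geq 3/2>7/5$. If $2\leq t\leq k$ (forcing $k\geq 2$), direct comparison yields $y_j=y_{j+k+1}$ for $j<t$ and $y_j=y_{j+k}$ for $j>t$; consequently the prefix $y_1\cdots y_{k+t}$ has period $k+1$ and exponent $(k+t)/(k+1)$, while the suffix $y_{t+1}\cdots y_{2k+1}$ has period $k$ and exponent $(2k+1-t)/k$. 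Viewed as functions of $t$, the first exponent is increasing and the second decreasing, and they meet at the balancing point $t^{\ast}=(k^{2}+3k+1)/(2k+1)$ with common value $(3k+1)/(2k+1)$; this strictly exceeds $7/5$ for all $k\geq 3$, and the one small case $k=2$, $t=2$ can be checked by hand (exponents $4/3$ and $3/2$). The complementary sub-case $k+2\leq t\leq 2k$ reduces to the preceding one by reversing $Y$. In every case we exhibit a factor of $W$ of exponent strictly greater than $7/5$, a contradiction; therefore $W$ is steady.

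The main obstacle is the bookkeeping in the third paragraph: after splitting according to where $t$ sits relative to the centre of the putative square, one must identify the two natural shifted periodic factors inside $Y$ and confirm that at least one of them exceeds the threshold $7/5$ for every admissible pair $(k,t)$. The calculation is tight — the Dejean threshold being exactly $7/5$ for four letters is what powers the argument — which is consistent with the introduction's remark that no ternary square-free word of length at least $6$ can be steady, since the ternary Dejean threshold $7/4$ is not small enough to forbid the obstructive factors of the form $\mathtt{xyx}$.
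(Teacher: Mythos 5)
Your proposal is correct and takes essentially the same route as the paper: start from a quaternary word with no factor of exponent greater than Dejean's threshold $7/5$ (Pansiot's case) and show that a single deletion creating a square would force a factor of exponent strictly above $7/5$. The paper packages this more cleanly through a \emph{separation property} (every factor $XYX$ satisfies $|Y|>|X|$, since otherwise the exponent is at least $3/2$) and a short two-case analysis, while you verify the same two candidate repetitions by direct period/exponent bookkeeping, but the substance of the argument is identical.
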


\begin{proof}
	From Dejean's theorem we get that there exist arbitrarily long quaternary words without factors of exponent greater than $\frac75$. Let $S$ be any such word. Notice that any factor of the form $XYX$ in $S$ must satisfy $|Y|>|X|$, where $|W|$ denotes the length of a word $W$. Indeed, the oposite inequality, $|Y|\leqslant|X|$, implies that $S$ contains a factor with  exponent at least $\frac32$, but $\frac32>\frac75$.
	
	We claim that any word with the above \emph{separation} property is steady. Assume to the contrary that deleting some single interior letter in the word $S$ generates a square. We distinguish two cases corresponding to the relative position of the deleted letter:
	\begin{enumerate}
		\item $S=AC\mathtt{a}CB$ for some letter $\mathtt{a}$ and words $A,B,C$, where $C$ is non-empty. If we put $X=C$ and $Y=a$, we immediately get a contradiction with separation property of $S$, as $|C|\geqslant1$.
		\item $S=AC'\mathtt{a}C''C'C''B$ for some letter $\mathtt{a}$ and words $A,B,C',C''$ where $C'$ and $C''$ are non-empty. If $|C'|\leqslant|C''|$, then the factor $C''C'C''$ contradicts the separation property of $S$ (by putting $X=C''$ and $Y=C'$). Otherwise, we have $|C''|<|C'|$, which implies that $|\mathtt{a}C''|\leqslant|C'|$. In this case, the factor $C'(\mathtt{a}C'')C'$ contradicts the separation property of $S$ (by taking $X=C'$ and $Y=\mathtt{a}C''$).
	\end{enumerate}
Thus, the word $S$ is steady, which completes the proof.
\end{proof}

\begin{table}\centering
	{\begin{tabular}{|c||ccccccccccccccc|}\hline
			$n$&3&4&5&6&7&8&9&10&11&12&13&14&15&16&17\\
			$N$&1&2&3&5&5&7&9&12&16&21&28&37&45&58&73\\\hline
			$n$&18&19&20&21&22&23&24&25&26&27&28&29&30&31&32\\
			$N$&93&101&124&150&179&216&257&309&376&453&551&662&798&957&1149\\\hline
	\end{tabular}}
	\vskip0.5cm
	\caption{The number $N$ of quaternary steady words of length $n$ (with respect to the permutations of symbols).}\label{T1n}
\end{table}

\subsection{Steady words from lists of size $7$}In this subsections we consider the list variant of steady words.
The proof of the following theorem is inspired by a beautiful argument due to Rosenfeld \cite{Rosenfeld1}, used by him in analogous problem for square-free words.

\begin{theorem}\label{Theorem 7-list Steady}
	Let $(\mathcal{A}_1,\mathcal{A}_2,\ldots)$ be a sequence of alphabets such that $\left|\mathcal{A}_i\right|=7$ for all $i$. For every $N$ there exist at least $4^N$ steady words $w=w_1w_2\ldots w_N$, such that $w_i\in \mathcal{A}_i$ for all $i\geqslant 1$.
\end{theorem}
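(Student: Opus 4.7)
The argument adapts Rosenfeld's counting / injection technique for list square-freeness. Let $s_n$ denote the number of steady words of length $n$ satisfying $w_i \in \mathcal{A}_i$. We prove $s_n \ge 4^n$ by strong induction on $n$, with straightforward base cases for small $n$. Throughout, write $V_i(W)$ for the word obtained from $W$ by deleting its $i$-th letter.

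The first step is to establish that steadiness is prefix-closed: if $W = w_1 \cdots w_n$ is steady then so is $w_1 \cdots w_{n-1}$, because each deletion of the shorter word is the length-$(n-2)$ prefix of a corresponding deletion of $W$, hence square-free. Consequently, every steady word of length $n$ extends a unique steady word of length $n-1$ by a single letter, so $s_n = 7\,s_{n-1} - B_n$, where $B_n$ counts \emph{bad} pairs $(W,x)$: those with $W$ steady of length $n-1$, $x \in \mathcal{A}_n$, but $Wx$ not steady. A pair is bad precisely when a new square is created: either (a)~$Wx$ has a suffix square $YY$ of period $k \ge 1$, or (b)~$V_i(W)\,x$ has such a suffix square for some $1 \le i \le n-1$ and some $k \ge 1$.

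For type (a), the classical Rosenfeld injection applies: the length-$(n-k)$ prefix $P$ of $W$ is steady by prefix-closure, and $(W,x)$ is uniquely reconstructed from $P$ as $W = P \cdot Y[1..k{-}1]$, $x = Y[k]$, where $Y$ is the last $k$ letters of $P$. Hence type (a) bad pairs number at most $\sum_{k \ge 1} s_{n-k}$. For type (b), a direct calculation shows that when the deletion position $i$ lies in the non-square prefix of $V_i(W)$, namely $i \le n-1-2k$, the same period-$k$ square also appears in $Wx$ itself (since reinserting the letter $w_i$ outside the suffix square of $V_i(W) x$ leaves that suffix square intact), so such a pair is already counted as type (a). The genuinely new type-(b) pairs therefore have $i$ restricted to the length-$2k$ square window, giving only $O(k)$ admissible positions; each such configuration is encoded by the length-$(n-k)$ prefix of $W$ (steady), the position $i$, and the deleted letter $w_i \in \mathcal{A}_i$. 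Summing contributes at most $C \sum_{k \ge 1} k\, s_{n-k}$ to $B_n$ for an absolute constant $C$.

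Closing the recurrence $s_n \ge 7\,s_{n-1} - \sum_{k \ge 1} s_{n-k} - C \sum_{k \ge 1} k\, s_{n-k}$ to yield $s_n \ge 4^n$ is then a generating-function computation: writing $\sigma(x) = \sum_n s_n x^n$, the inequality becomes $\sigma(x)\bigl(1 - 7x + x/(1-x) + Cx/(1-x)^2\bigr) \ge 1$, and one verifies that the smallest positive root of the bracketed expression is at least $1/4$ provided $C$ is below an explicit threshold derived from the type-(b) count. The alphabet size $7$ is calibrated precisely so that the $7 s_{n-1}$ main gain outpaces the geometric losses at growth rate $4$. \textbf{The main obstacle} is the type-(b) bookkeeping: single deletions of steady words are only guaranteed to be square-free, not steady, so there is no direct injection into shorter \emph{steady} words. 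The resolution is the structural observation that only insertions inside the freshly created square give genuinely new bad pairs, which confines the multiplicity constant $C$ sufficiently to carry the induction at growth rate $4$.
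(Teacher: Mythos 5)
Your overall strategy is the same as the paper's (Rosenfeld-type counting: steadiness is prefix-closed, $s_n = 7s_{n-1} - B_n$, and bad extensions are injected into shorter steady words indexed by the period of the created square), but the proof does not close as written, and the gap is exactly in the step you leave to an ``absolute constant $C$''. With the target growth rate $4$, the induction needs $B_n \leqslant 3 s_{n-1}$. Your bound is $B_n \leqslant \sum_{k\geqslant 1} s_{n-k} + C\sum_{k\geqslant 1} k\,s_{n-k}$, and under the induction hypothesis $s_{n-k}\leqslant 4^{-(k-1)}s_{n-1}$ these sums are $\tfrac43 s_{n-1}$ and $\tfrac{16}{9}s_{n-1}$, so you need $\tfrac43 + \tfrac{16}{9}C \leqslant 3$, i.e.\ $C\leqslant \tfrac{15}{16}$. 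But the type-(b) encoding you describe (a window of about $2k$ admissible deletion positions, each reconstructed from the length-$(n-k)$ steady prefix, the position, and up to $7$ guesses for the deleted letter) gives a constant of order $8$ to $14$, not less than $1$; even the single term $k=2$ then already exceeds the entire budget $3s_{n-1}$. So the statement ``one verifies the root is at least $1/4$ provided $C$ is below an explicit threshold'' is precisely the unverified (and, for your bound, false) step; the margin in this theorem is razor-thin (the paper's final factor is $\tfrac{17}{4}-\tfrac29\approx 4.03$), so no generic constant survives.

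What is missing is the sharp treatment of small periods and of deletion positions near the ends of the square, which is where the paper does real work. It stratifies bad words by the \emph{shortest} suffix of length $2j+1$ that becomes a square after one deletion, and uses steadiness of the length-$n$ prefix plus this minimality to kill almost all low-order configurations: for $j=1$ only endings $\mathtt{aa}$ and $\mathtt{axa}$ occur (at most $2C_n$), for $j=2$ only $\mathtt{abxab}$ occurs (at most $C_{n-1}$; endings $\mathtt{abab}$, $\mathtt{axbab}$, $\mathtt{abaxb}$ are excluded because the prefix is steady), and for $j\geqslant 3$ the pattern $PxQPQ$ forces $|P|\geqslant 2$ (else the word lies in an earlier stratum) while $PQPxQ$ forces $|Q|\geqslant 2$, and the $7$-fold letter guess is only needed when $|P|\geqslant 2$ in the second pattern. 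This yields $D_j\leqslant C_{n+2-j}+(8j-22)C_{n+1-j}$, whose weighted sum is just under $3C_n$ (about $2.98\,C_n$). In your write-up these exclusions appear nowhere: you only discard deletion positions lying wholly before the square, which is far from enough. To repair the proposal you would have to carry out this finer case analysis (or an equivalent one) explicitly and re-derive the recurrence with the resulting coefficients, rather than with a single constant $C$.
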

\begin{proof}
	Let $\mathbb{C}_n$ be the set of steady words $w=w_1w_2\ldots w_n$, such that $w_i\in \mathcal{A}_i$ for all $i$ and set $C_n$ to be the size of $\mathbb{C}_n$. Similarly, let $\mathbb{F}_{n}$ be the set of words $f=f_1f_2\ldots f_n$ with $f_i\in L_i$ for all $i$, such that $f \notin \mathbb{C}_n$ but $f_1f_2\ldots f_{n-1}$ is in $\mathbb{C}_{n-1}$. We think of $F_n$ as the number of ways we can fail by appending a new symbol at the end of a steady word of length $n-1$. Our goal is to give a good upper bound on $F_n$.
	
	\begin{claim}
		\label{claim_failsnumber}
		$$F_{n+1}\leqslant 2C_n + 2C_{n-1} + \sum_{i=0}^{\infty}\left(3+8i\right)C_{n-2-i}$$
	\end{claim}
	\begin{proof}
		Let us partition $\mathbb{F}_{n+1}$ into subsets $\mathbb{D}_1, \mathbb{D}_2, \mathbb{D}_3, \ldots$ defined such that the word $f=f_1f_2\ldots f_{n+1}$ from $\mathbb{F}_{n+1}$ is in $\mathbb{D}_j$ if the suffix of $f$ of length $2j+1$ can be reduced to a square by removing a single letter, and $f$ is not in $\mathbb{D}_{j-1}$. For example $\mathbb{D}_1$ contains words that end with $aa$ or $axa$, $\mathbb{D}_2$ contains words that end with $abxab$ (note that $abab$, $axbab$ and $abaxb$ are not possible) and $\mathbb{D}_3$ contains words that end with 
		$abxcabc$, $abcxabc$ or $abcaxbc$ (again, $abcabc$, $axbcabc$ and $abcabxc$ are not possible). It is clearly a partition by the definition of $\mathbb{F}_n$.
		
		Let $D_j$ be the size of $\mathbb{D}_{j}$. Note that $D_1\leqslant 2C_{n}$, because every word from $\mathbb{D}_1$ can be obtained by appending to some word from $\mathbb{C}_{n}$ a repetition of the last or one before last letter. We also have $D_2\leq C_{n-1}$ because, as remarked earlier, $\mathbb{D}_2$ contains only words that end with $abxab$ (where $a,b$ and $x$ are single letters) and each such word can be obtained by repeating a third- and second-from-last letter of a word from $\mathbb{C}_{n-1}$.
		
		Now consider a word $f=f_1f_2\ldots f_{n+1}$ from $\mathbb{D}_j$ for $j>2$. Since a suffix of $f$ of length $2j+1$ can be reduced to a square by removing a single non-final letter, we have that either (a) $f_{n-2j+1}f_{n-2j+1}\ldots f_{n+1} = PxQPQ$ or (b) $f_{n-2j+1}f_{n-2j+1}\ldots f_{n+1} = PQPxQ$, where $x$ is a~single letter and $P$ and $Q$ are words, where $\left|P\right|+\left|Q\right|=j$ and $Q$ is nonempty. We will count the number of words $f$ from $\mathbb{D}_j$ that fit those cases separately for every possible position of $x$ (i.e. the length of $P$). Note that in case (a) we must have $\left|P\right|\geqslant 2$, because otherwise $f$ would be contained in $\mathbb{D}_{j-1}$. Therefore, there are $j-2$ possible lengths of $P$ and for each of those lengths there are $C_{n+1-j}$ compatible words in $\mathbb{D}_j$, as each such word can be obtained from a word from $\mathbb{C}_{n+1-j}$ by appending $PQ$ at the end; this totals to $(j-1)C_{n+1-j}$.
		
		Similarly, in case (b) $\left|Q\right|\geqslant 2$, because otherwise $f$ would not be contained in $\mathbb{F}_{n+1}$. If the~length of $P$ is $0$ (respectively $1$), then there are at most $C_{n+1-j}$ (resp. $C_{n+2-j}$) compatible words in $\mathbb{D}_j$, obtained by repeating $j$ (resp. $j-1$) letters from $\mathbb{C}_{n+1-j}$ (resp. $\mathbb{C}_{n+2-j}$). If the~length of $P$ is at least $2$ (for which there are $j-3$ possibilities), then the number of compatible words is at most $7C_{n+1-j}$, as each of them can be obtained by repeating $j$ letters from a word in $\mathbb{C}_{n+1-j}$ and picking one of seven letters from the list $\mathcal{A}_{n+1-\left| P\right|}$. 
		
		After summing the above estimation in both cases, we obtain that for $j>2$
		\begin{align*}
		D_j \leqslant \left(j-1\right)C_{n+1-j} + C_{n+1-j} + C_{n+2-j} + \left(j-3\right)7C_{n+1-j} = C_{n+2-j} + \left(8j-22\right)C_{n+1-j}.
		\end{align*}
		This, together with our estimations on $D_1$ and $D_2$, implies that
		\begin{align*}
		F_{n+1}\leqslant 2C_{n} + C_{n-1} + \sum_{j=3}^{\infty}\left( C_{n+2-j} + \left(8j-22\right)C_{n+1-j}\right) = \\
		= 2C_{n} + C_{n-1} + C_{n-1} + \sum_{i=0}^{\infty}C_{n-2-i} + \sum_{i=0}^{\infty}\left(8i+2\right)C_{n-2-i}= \\
		= 2C_n + 2C_{n-1} + \sum_{i=0}^{\infty}\left(3+8i\right)C_{n-2-i},
		\end{align*}
		Which  concludes the proof of the claim.
	\end{proof}
	
	Now we inductively show that $C_n \geqslant 4 C_{n-1}$ for all $n>0$. It is clearly true for $n=1$. Note that $C_{n+1} = 7C_n - F_{n+1}$, so by Claim \ref{claim_failsnumber} we obtain that
	\begin{align*}
	C_{n+1} \geqslant 5C_n - 2C_{n-1} - \sum_{i=0}^{\infty}\left(3+8i\right)C_{n-2-i}.
	\end{align*}
	Using the induction assumption and then calculating sums of the geometric series it follows that
	\begin{align*}
	C_{n+1} \geqslant 5C_n - 2\frac{C_{n}}{4} - \sum_{i=0}^{\infty}\left(3+8i\right)\frac{C_n}{4^{2+i}} 
	= C_n \left( \frac{9}{2} - 3\sum_{i=0}^{\infty}\frac{1}{4^{2+i}} - \frac{1}{2}\sum_{i=1}^{\infty}\frac{i}{4^{i}}\right)  \\
	= C_n \left( \frac{9}{2} - \frac{1}{4} - \frac{1}{2}\sum_{i=1}^{\infty}\sum_{j=1}^{i}\frac{1}{4^{i}}\right) 
	= C_n \left( \frac{17}{4} - \frac{1}{2}\sum_{j=1}^{\infty}\sum_{i=j}^{\infty}\frac{1}{4^{i}}\right) \\
	= C_n \left( \frac{17}{4} - \frac{1}{2}\sum_{j=1}^{\infty}\frac{1}{4^{j}}\frac{4}{3}\right) 
	= C_n \left( \frac{17}{4} - \frac{2}{3}\sum_{j=1}^{\infty}\frac{1}{4^{j}}\right)  \\
	= C_n \left( \frac{17}{4} - \frac{2}{9} \right) > 4 C_n,
	\end{align*}
	which completes the proof.
\end{proof}

\subsection{Steady words are bifurcate}
As it turns out, the only square-free word which is steady, but not bifurcate is $\mathtt{1}$ over the~alphabet $\{\mathtt{1}\}$. 

\begin{theorem}\label{Theorem Steady-Bifurcate}
	Let $\mathcal{A}$ be a fixed alphabet with at least three letters. Every steady word over $\mathcal{A}$ is bifurcate over $\mathcal{A}$.
\end{theorem}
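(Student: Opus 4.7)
The plan is to fix a steady word $W = w_1 w_2 \cdots w_n$ and an arbitrary position $i \in \{0, 1, \ldots, n\}$, and to exhibit a letter $\mathtt{x} \in \mathcal{A}$ whose insertion at position $i$ produces a square-free word. Write $W = UV$ with $|U| = i$, and let $u$ denote the last letter of $U$ (when $i \geq 1$) and $v$ the first letter of $V$ (when $i \leq n-1$). Since $W$ is square-free, $u \neq v$ whenever both are defined, so the set $\mathcal{A} \setminus \{u,v\}$ is nonempty under the hypothesis $|\mathcal{A}| \geq 3$; I pick any letter $\mathtt{x}$ from this set.

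I claim that $W' := U \mathtt{x} V$ is square-free, and I would argue by contradiction: suppose $W'$ contains a square $YY$ of length $2k$. Since $W$ itself is square-free, this square must cover the inserted letter $\mathtt{x}$ at position $i+1$ of $W'$. By reversal symmetry (the reverse $W^R$ is also steady, and inserting at position $i$ of $W$ mirrors inserting at position $n-i$ of $W^R$), I may assume that the inserted $\mathtt{x}$ lies in the first copy of $Y$. Then I write $Y = A \mathtt{x} B$ with $A$ a suffix of $U$, $B$ a prefix of $V$, and $k = |A| + |B| + 1$.

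If $|A| = |B| = 0$, then $YY = \mathtt{x}\mathtt{x}$ forces $u = \mathtt{x}$ or $v = \mathtt{x}$, contrary to the choice of $\mathtt{x}$. Otherwise, matching the two copies of $Y$ inside $W'$ and translating back to $W$ (position $j \geq i+2$ of $W'$ corresponds to position $j-1$ of $W$) shows that the factor of $W$ at positions $i - |A| + 1$ through $i + k + |B|$ reads $A \, B \, A \, \mathtt{x} \, B$; in particular $w_{i+k} = \mathtt{x}$. Now delete the letter $w_{i+k}$ from $W$: in the resulting word of length $n-1$, the very same stretch reads $A \, B \, A \, B = (AB)(AB)$, a nonempty square since $|A| + |B| \geq 1$. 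This contradicts the steadiness of $W$, so no such $YY$ exists and $W'$ is square-free.

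The case analysis is minimal --- the only real dichotomy (whether the inserted $\mathtt{x}$ lies in the first or second copy of $Y$) is absorbed by the reversal symmetry, and the boundary conditions ($i - |A| + 1 \geq 1$ and $i + k + |B| \leq n$) drop out automatically from the fact that $YY$ fits inside $W'$. The main conceptual step, which I expect to be the crux, is spotting the bridge between extensions and reductions: a square of period $k$ forced by the insertion must re-use the letter $\mathtt{x}$ at position $i + k$ of $W$, and deleting exactly that occurrence converts the would-be insertion square around $\mathtt{x}$ into the honest square $(AB)(AB)$ whose existence is forbidden by steadiness.
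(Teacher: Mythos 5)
Your argument is correct, and it is genuinely different from (and in some ways stronger than) the proof in the paper. The paper fixes a \emph{specific} letter to insert --- $w_2$ at the front, $w_3$ at position $1$, and $w_{j+2}$ at an interior position $j$ --- and then exploits the uniquely occurring palindrome $\mathtt{cbc}$ created by that choice, splitting into boundary cases and three interior subcases, each refuted using steadiness. You instead prove the uniform statement that \emph{any} letter $\mathtt{x}$ avoiding the at most two neighbouring letters yields a square-free extension: a square in $U\mathtt{x}V$ must contain the inserted letter, and after the legitimate reversal reduction (the reverse of a steady word is steady, and reversal swaps the two copies of the square and the two neighbours) you may place $\mathtt{x}$ in the first copy, write $Y=A\mathtt{x}B$, and the matching of the two copies forces $W$ to contain $ABA\mathtt{x}B$ with $\mathtt{x}=w_{i+k}$; deleting that single letter produces the square $(AB)(AB)$ in a reduction of $W$, contradicting steadiness, while the degenerate case $|A|=|B|=0$ is exactly where the hypothesis $\mathtt{x}\notin\{u,v\}$ is used. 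I checked the index bookkeeping (the second copy lies entirely to the right of the insertion point, the factor occupies positions $i-|A|+1$ through $i+k+|B|$ of $W$, and these bounds are automatic) and it is sound. The conceptual bridge --- an insertion-created square converts into a deletion-created square --- is the same engine that drives the paper's subcases, but your version runs it once, with no case analysis over the position $j$, and yields the stronger quantitative conclusion that a steady word over $\mathcal{A}$ has at least $|\mathcal{A}|-2$ square-free extensions at every interior position and $|\mathcal{A}|-1$ at each end; the paper's choice, on the other hand, produces an explicit canonical extension (copying $w_{j+2}$), which is convenient if one wants a concrete witness.
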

\begin{proof}
	The words $\mathtt{1}$, $\mathtt{12}$, and $\mathtt{123}$ are the only steady words with length not greater than 3. Such words are also bifurcate, therefore the theorem holds in their cases.
	
	Thus, let us assume that $n\geqslant4$ and let the square-free word $W=w_1w_2\ldots w_n$ over $\mathcal{A}$ be steady. Such word does not contain a factor $\mathtt{xyx}$, so $w_i\neq w_{i+2}$ for every $1\leqslant i\leqslant n-2$.
	
	We show that for every $0\leqslant j\leqslant n$ there exists $\mathtt{x}\in\mathcal{A}$ such that the word $$P_j(W)\mathtt{x}S_{n-j}(W)$$ is square-free. The main idea is to show that creating a palindromic factor $\mathtt{xyx}$ in the extended word is in favor of its square-freeness. Further, we use a simple fact that an extension $A\mathtt{x}B$ of a square-free word $AB$ is square-free if and only if every factor of $A\mathtt{x}B$ which contains $\mathtt{x}$ is square-free.
	\begin{case}
		[$j=0$ and $j=n$]
	\end{case}
	
	Consider the extension of the word $W=w_1w_2\cdots w_n$ by the letter $w_2$ at the beginning:$$Y=\underline{w_2}w_1w_2\ldots w_n.$$ We will show that this extension is square-free. Since $w_1w_2\ldots w_n$ is square-free, it is sufficient to show that every prefix of $Y$ is square-free. Of course, $w_2$, $w_2w_1$ and $w_2w_1w_2$ are square-free. Let us notice that $w_1\neq w_3$, so $P_4(Y)$ is square-free. Finally, $w_2w_1w_2$ is a unique factor of the form $\mathtt{xyx}$ in the word $Y$, so it cannot be a prefix of a square factor of length greater than 5 in $Y$.
	
	Analogously, one can show that the~word $w_1\ldots w_{n-1}w_n\underline{w_{n-1}}$ is square-free.
	\begin{case}
		[$j=1$ and $j=n-1$]
	\end{case}
	
	Consider the extension of $W$ by inserting the letter $w_3$ between the first and the second letter of $W$. The resulting word, $$Y=w_1\underline{w_3}w_2w_3\ldots w_n,$$ is square-free, since $w_1\neq w_3$, $w_2\neq w_4$, and $w_3w_2w_3$ is a unique factor of form $\mathtt{xyx}$. So, both words, $w_1w_3w_2w_3$ and $w_3w_2w_3$, are the prefixes of square-free factors.
	
	Analogously, one can show that the~word $$Y=w_1\ldots w_{n-2}w_{n-1}\underline{w_{n-2}}w_n$$ is square-free.
	\begin{case}
		[$2\leqslant j\leqslant n-2$]
	\end{case}
	
	In this part, let us assume that $w_j=\mathtt{a}$, $w_{j+1}=\mathtt{b}$, and $w_{j+2}=\mathtt{c}$, where $\mathtt{a}$, $\mathtt{b}$, and $\mathtt{c}$ are pairwise different letters. Thus $$W=P_{j-1}(W)\mathtt{abc}S_{n-j-2}(W).$$
	Let us investigate the extension
	$$Z=P_j(W){\mathtt{c}}S_{n-j}(W)$$
	of the word $W$, that is,
	$$Z=\lefteqn{\underbrace{\phantom{w_1w_2\ldots \mathtt{a\underline{c}bc}}}_{P_{j+3}(Z)}}w_1w_2\ldots \mathtt{a}\overbrace{\underline{\mathtt{c}}\mathtt{bc}w_{j+3}\ldots w_{n}}^{S_{n-j+1}(Z)}.$$
	We will show that $Z$ is indeed a square-free word.
	
	First notice that the word $S_{n-j+1}(Z)$ is actually an extension of the word $S_{n-j}(W)$ by inserting the second letter, $\mathtt{c}$, at the beginning. Since $S_{n-j}(W)$ is a steady word, we obtain that $S_{n-j+1}(Z)$ is square-free, by Case 1.
	
	Next notice that the word $P_{j+3}(Z)$ cannot contain a square as a suffix since it ends up with the unique palindrome $\mathtt{cbc}$. Therefore, it is sufficient to show that there are no squares in prefixes $P_{j+1}(Z)$ and $P_{j+2}(Z)$, and that no other potential square in $Z$ may contain the factor $\mathtt{cbc}$.
	
\begin{dupa}
	[No squares in $P_{j+1}(Z)$]
\end{dupa}
		
		 Let us recall that $P_j(Z)=P_j(W)$ is steady. If there is a~square suffix in $P_{j+1}(Z)$, then
		 $$P_{j+1}(Z)=AB\mathtt{c}B\mathtt{c}$$
		 for some words $A$ and $B$ (where $A$ is possibly empty) and hence $$P_{j}(Z)=AB\mathtt{c}B=P_j(W),$$ which gives us a contradiction with the assumption that $W$ is steady.		 
		 
\begin{dupa}
	[No squares in $P_{j+2}(Z)$]
\end{dupa}
		
		If there is a square suffix in $P_{j+2}(Z)$, then
		$$P_{j+2}(Z)=AB\mathtt{cb}B\mathtt{cb}$$
		for some, possibly empty, words $A$ and $B$. This construction implies that
		$$P_{j+1}(W)=AB\mathtt{cb}B\mathtt{b}.$$
		This word contains a reduction $$AB\mathtt{b}B\mathtt{b},$$ which contradicts the fact that $W$ is steady. 
		
\begin{dupa}[No squares with the factor $\mathtt{cbc}$]
\end{dupa}
		
		If $Z$ has a square $U$ which contains a unique factor $\mathtt{cbc}$, then
		$$U=\mathtt{bc}A\mathtt{c}\mathtt{bc}A\mathtt{c}\ \mbox{ or }\ U=\mathtt{c}A\mathtt{cb}\mathtt{c}A\mathtt{cb}$$
		for certain word $A$, and so $W$ has to contain a factor
		$$U_1=\mathtt{bc}A\mathtt{bc}A\mathtt{c}\ \mbox{ or }\ U_2=\mathtt{c}A\mathtt{bc}A\mathtt{cb}.$$
		Let us notice that $W$ cannot contain a factor $U_1$ since it would imply that $W$ is not square-free. Moreover, $W$ cannot contain a factor $U_2$ since it would imply that $W$ has a reduction with a factor $\mathtt{c}A\mathtt{c}A\mathtt{cb}$, which contradicts the fact that $W$ is steady.

	The proof is complete.

\end{proof}

Let us notice that the conversed statement is not true --- a word $\mathtt{12312}$ over the alphabet $\{\mathtt{1,2,3}\}$ is bifurcate, but not steady.

\subsection{Bifurcate trees of words}We will now prove the aforementioned result on bifurcate trees. Let us start with a formal definition.

A \emph{bifurcate tree} is any family $\mathbb{B}$ of bifurcate words over a fixed alphabet arranged in a~rooted tree so that the descendants of a word are its single-letter extensions at different positions. Thus, a word of length $n$ may have at most $n+1$ descendants. A bifurcate tree is called \emph{complete} if every vertex has the maximum possible number of descendants. Notice that such a tree must be infinite.

We will demonstrate that complete bifurcate trees exist over alphabets of size at least $12$. This fact follows easily from the results concerning \emph{on-line nonrepetitive games} obtained in \cite{GrytczukSZ} and independently in \cite{KeszeghZhu}, but we recall the proof for completeness. The key idea is to apply the following result form graph coloring.

A coloring $c$ the vertices of a graph $G$ is \emph{square-free} if for every simple path $v_1v_2\ldots v_n$ in $G$, the word $c(v_1)c(v_2)\cdots c(v_n)$ is square-free. A graph is \emph{planar} if it can be drawn on the plane without crossing edges. A planar graph is called \emph{outerplanar} if it has a plane drawing such that all vertices are incident to the outer face. 

\begin{theorem}[K\"{u}ndgen and Pelsmayer \cite{KundgenPelsmajer}]\label{Theorem Kundgen-Pelsmajer}Every outerplanar graph has a square-free coloring using at most $12$ colors.
\end{theorem}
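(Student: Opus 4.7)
The plan is to proceed by induction on the number of vertices of the outerplanar graph $G$, combined with a product coloring. First, I would reduce to the case of \emph{maximal} outerplanar graphs: adding edges can only enlarge the family of simple paths, so a square-free coloring of a maximal outerplanar supergraph on the same vertex set is automatically square-free for every subgraph. A maximal outerplanar graph is a triangulation of a polygon whose weak dual is a tree, and it admits an ``ear'' decomposition in which there is always a vertex of degree exactly $2$ on the outer face whose deletion yields a smaller maximal outerplanar graph.

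The main idea is a product construction using $12 = 4 \cdot 3$ colors: one coordinate records a proper $3$-coloring of $G$, which exists since maximal outerplanar graphs have treewidth $2$ and are therefore $3$-colorable; the other coordinate takes four values chosen from a quaternary square-free Thue word (for instance, one of the Dejean-type words of exponent at most $7/5$ used in the proof of Theorem~\ref{Theorem Steady 4}). If a simple path $v_1 v_2 \cdots v_{2k}$ had color sequence equal to a square $WW$, the $3$-coloring coordinate would force consecutive vertices to differ inside each half, and the role of the $4$-coloring coordinate is then to force the color sequence along the path to appear as a factor of the fixed square-free word, contradicting its square-freeness.

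The main obstacle is assigning the $4$-coloring coordinate in a way that is compatible with the outerplanar structure, because a simple path in $G$ can zig-zag between outer-face edges and chords in essentially arbitrary order, so its projection onto any fixed linear ordering of the vertices need not be a contiguous factor, nor even monotone. I would attempt to strengthen the inductive hypothesis so that, when an ear vertex $v$ is reattached, the set of ``bad'' colors for $v$ -- those that would complete a square on some simple path through $v$ -- is provably of size strictly less than $12$; this reduces matters to a counting argument controlling how many distinct squares can have a repetition pivoting at a single newly added vertex, which one would exploit via the tree structure of the weak dual. I expect this bookkeeping to be the technically delicate heart of the proof and the place where the specific bound $12$ (rather than something smaller) emerges.
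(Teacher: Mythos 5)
This statement is not proved in the paper at all: it is quoted from K\"{u}ndgen and Pelsmajer \cite{KundgenPelsmajer} and used as a black box in Theorems \ref{Theorem Bifurcate Tree} and \ref{Theorem Bifurcate Tree Infinite}, so the only question is whether your argument would stand on its own, and at present it would not. The reduction to maximal outerplanar graphs is fine, but the two concrete ingredients you name do not carry the load. A proper $3$-coloring only forbids two adjacent vertices from receiving the same color, i.e.\ it kills squares $XX$ with $|X|=1$ and gives no control whatsoever over longer squares, so that coordinate does essentially nothing; and you yourself observe that the $4$-letter Thue-word coordinate cannot be defined, because simple paths in an outerplanar graph do not project to factors of any linear order. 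That leaves your fallback: an ear-decomposition induction in which the newly attached degree-$2$ vertex $v$ has fewer than $12$ ``bad'' colors. This is the genuine gap, and it is not mere bookkeeping: $v$ can lie in the interior of simple paths of every even length $2k$, and each such path whose other $2k-1$ colors already form a near-square forbids one color for $v$; nothing in the ear structure bounds the number of these already-matching configurations by any constant. This is exactly the obstacle that makes even the greedy/list versions of Thue's theorem on a path nontrivial (the list problem on paths needs alphabets of size $4$ and global arguments such as the Rosenfeld-style counting used in Theorem \ref{Theorem 7-list Steady}, which counts failures over all words of a given length rather than bad colors at a single position), so the bound $12$ cannot ``emerge'' from a one-vertex counting step.

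For comparison, the actual proof of K\"{u}ndgen and Pelsmajer does realize $12=4\cdot 3$, but with different ingredients: every outerplanar graph admits a BFS-type, shadow-complete layering in which each layer induces a linear forest; each layer is colored square-freely with Thue's $3$ letters, and a general lemma about such layerings (analyzing a hypothetical square through the lowest layer it meets, at the cost of a factor $4$) combines the layers. In other words, the second coordinate is governed by the layer structure, not by a proper coloring or a linearly ordered Thue word, and the factor $4$ comes from that layering lemma. If you want to salvage your outline, you would need either to import such a layering argument or to replace the per-vertex count by a global counting (Rosenfeld/local-lemma style) over all colorings of the graph, which is a substantially different and harder bookkeeping than the one you sketch.
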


	\begin{figure}[h]
	\center
	\includegraphics[width=0.75\textwidth]{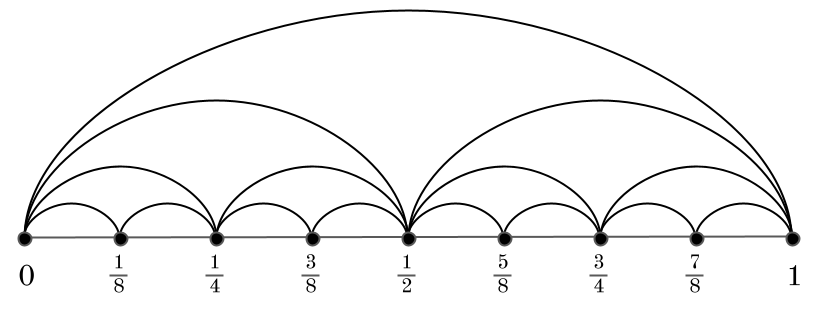}
	\caption{The graph $D_3$.}\label{Figure Dyadic}
\end{figure}

We will apply this theorem to the infinite graph $D$ constructed as follows. Let us denote $V_0=\{0,1\}$ and  $V_n=\{0,\frac{1}{2^n},\frac{2}{2^n},\frac{3}{2^n},\ldots,\frac{2^n-1}{2^n},1\}$ for $n\geqslant1$. Let $P_n$, $n\geqslant0$, denote the simple path on $V_n$ with edges joining consecutive elements in $V_n$. Let $D_n$ be the union of all paths $P_j$ with $0\leqslant j\leqslant n$ (see Figure \ref{Figure Dyadic}) and let $D$ be the countable union of all graphs $D_n$, $n\geqslant 0$.

\begin{theorem}\label{Theorem Bifurcate Tree}There exists a complete bifurcate tree over alphabet of size $12$.
\end{theorem}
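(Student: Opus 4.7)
The plan is to realize every word in the desired tree as the color sequence along a strictly increasing simple path in the dyadic graph $D$, for a square-free $12$-coloring $c$ of $D$ supplied by Theorem~\ref{Theorem Kundgen-Pelsmajer}.

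First I would check that $D$ is outerplanar. Drawing each edge $\{k/2^j,(k+1)/2^j\}$ of $P_j$ as a semicircular arc above the interval $[0,1]$ yields a planar embedding: any two such arcs span dyadic sub-intervals which, by comparing their levels, are either disjoint or strictly nested, so no two arcs cross and every vertex lies on the outer face. In particular every finite subgraph of $D$ is outerplanar, so Theorem~\ref{Theorem Kundgen-Pelsmajer} gives a square-free $12$-coloring of each. A compactness argument (for each finite simple path $\pi$, the set of colorings square-free on $\pi$ is closed in $\{1,\dots,12\}^{V(D)}$, and any finite family of such paths is contained in some $D_n$, which admits a square-free $12$-coloring) then produces a single square-free $12$-coloring $c$ of all of $D$.

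Next I would define the tree $\mathbb{B}$ by associating to each node a strictly increasing simple path $\pi=(v_1<v_2<\cdots<v_n)$ in $D$ with every $v_k\in(0,1)$, labelled by the word $W(\pi)=c(v_1)c(v_2)\cdots c(v_n)$. The root is the path $(1/2)$. Given a node with path $\pi$ of length $n$ and a position $i\in\{0,1,\dots,n\}$, its $i$-th descendant is obtained by inserting a new dyadic rational $u\in(0,1)$ into $\pi$ as follows: for $1\le i\le n-1$, take $u=(v_i+v_{i+1})/2$, which, if the edge $\{v_i,v_{i+1}\}$ came from $P_j$, lies in $V_{j+1}$ and is adjacent in $P_{j+1}$ to both endpoints; for $i=0$, writing $v_1=k/2^j$ with $k\ge 1$, take $u=(2k-1)/2^{j+1}\in(0,v_1)$, adjacent to $v_1$ in $P_{j+1}$; for $i=n$, apply the symmetric move at the right end. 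In every case $u\in(0,1)$ is not already on $\pi$, so the extended path is again a strictly increasing simple path in $D$, and its label is $W(\pi)$ with the single letter $c(u)$ inserted at position $i$.

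Since every label in $\mathbb{B}$ is the color sequence of a simple path in $D$ and $c$ is square-free, every label is a square-free word; and since every node of length $n$ admits descendants at all $n+1$ positions, each label is bifurcate and the tree is complete. The main obstacle is the $12$-colorability of the \emph{infinite} graph $D$: the nested-arc argument for outerplanarity is elementary, but lifting the K\"{u}ndgen--Pelsmajer theorem from finite subgraphs to $D$ itself requires the compactness step described above. Once $c$ is in hand, the tree construction is routine bookkeeping with dyadic rationals.
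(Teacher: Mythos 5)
Your proposal is correct and follows essentially the same route as the paper: apply the K\"{u}ndgen--Pelsmajer theorem to the finite outerplanar graphs $D_n$, pass to a square-free $12$-coloring of the infinite dyadic graph $D$ by compactness, and read off the complete bifurcate tree from increasing simple paths extended by inserting dyadic midpoints (or nearest next-level points at the ends). Your write-up merely makes explicit the outerplanarity of $D_n$, the compactness step, and the path bookkeeping that the paper leaves as routine.
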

\begin{proof}
 Clearly, every finite graph $D_n$ is outerplanar, hence by Theorem \ref{Theorem Kundgen-Pelsmajer} it has a square-free coloring with $12$ colors. By compactness, the infinite graph $D$ also has a $12$-coloring without a square on any simple path. Fix one such square-free coloring $c$ of the graph $D$. It is now easy to extract a complete bifurcate tree $\mathcal{B}$ out of this coloring in the following way.
 
 The root of $\mathcal{B}$ is $c(\frac{1}{2})$. Its descendants are $c(\frac{1}{4})c(\frac{1}{2})$ and $c(\frac{1}{2})c(\frac{3}{4})$. Each of these has the~following sets of descendants, $$c\left(\frac{1}{8}\right)c\left(\frac{1}{4}\right)c\left(\frac{1}{2}\right),c\left(\frac{1}{4}\right)c\left(\frac{3}{8}\right)c\left(\frac{1}{2}\right),c\left(\frac{1}{4}\right)c\left(\frac{1}{2}\right)c\left(\frac{5}{8}\right)$$ and $$c\left(\frac{3}{8}\right)c\left(\frac{1}{2}\right)c\left(\frac{3}{4}\right),c\left(\frac{1}{2}\right)c\left(\frac{5}{8}\right)c\left(\frac{3}{4}\right),c\left(\frac{1}{2}\right)c\left(\frac{3}{4}\right)c\left(\frac{7}{8}\right),$$ respectively. In general, every word $W$ in $\mathbb{B}$ corresponds to a path in the graph $D$ with vertices taken from different paths $P_n$ and closest to each other as the rational points in the~unit interval. All these words are square-free by the more general property of the coloring~$c$.
 \end{proof}
One may easily extend this result to doubly infinite words, with the notions of bifurcate words and trees extended in a natural way.
\begin{theorem}\label{Theorem Bifurcate Tree Infinite}There exists a complete bifurcate tree of doubly infinite words over alphabet of size $12$.
\end{theorem}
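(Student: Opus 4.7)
The plan is to run the proof of Theorem \ref{Theorem Bifurcate Tree} almost verbatim, replacing the dyadic rationals in the unit interval with dyadic rationals on the entire real line. Concretely, I would set $V_n' = \{k/2^n : k \in \mathbb{Z}\}$, let $P_n'$ be the doubly infinite path joining consecutive elements of $V_n'$, put $D_n' = \bigcup_{j \leqslant n} P_j'$, and define $D' = \bigcup_{n \geqslant 0} D_n'$.

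The first step is the coloring step. Every finite subgraph of $D'$ is contained in a restriction $D_n' \cap [-M,M]$ for some $n$ and $M$, and such a restriction is a finite outerplanar graph by exactly the embedding used before: place the dyadic points on a horizontal line and draw the edges of $P_j'$ for $j < n$ as nested arcs above the line. Theorem \ref{Theorem Kundgen-Pelsmajer} therefore produces a square-free $12$-coloring on every such finite piece, and a standard De Bruijn--Erd\H{o}s compactness argument then supplies a square-free $12$-coloring $c'$ of the entire graph $D'$.

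The second step is to extract the tree $\mathcal{B}'$ of doubly infinite words from $c'$. Its root is the doubly infinite word read off from $c'$ along $V_0' = \mathbb{Z}$ in the natural order. In general, every word in $\mathcal{B}'$ corresponds to a doubly infinite simple path $\ldots v_{-1} v_0 v_1 \ldots$ in $D'$, and its descendants are obtained by choosing any one of the countably many gaps $(v_i, v_{i+1})$ and inserting the midpoint $(v_i + v_{i+1})/2$. Since $v_i$ and $v_{i+1}$ are consecutive in some $V_n'$, their midpoint lies in $V_{n+1}'$ and is consecutive to each of them in $P_{n+1}'$, so the resulting sequence is still a simple path in $D'$ and inherits a square-free coloring from $c'$. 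Each word thus acquires its maximum possible (countably infinite) number of descendants, yielding the desired complete bifurcate tree of doubly infinite words over a $12$-letter alphabet.

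I expect the only mildly delicate point to be the compactness step, since in this setting the auxiliary graphs $D_n'$ are themselves infinite; however, this is resolved by exhausting $D'$ with its finite pieces $D_n' \cap [-M,M]$ and invoking compactness on the resulting family of square-free $12$-colorings furnished by Theorem \ref{Theorem Kundgen-Pelsmajer}. All remaining steps are a direct transcription of the singly infinite case.
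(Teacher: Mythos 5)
Your proposal is correct and is essentially the paper's own argument: your graph $D'$ on all dyadic rationals is exactly the paper's graph $G$ (a copy of $D$ glued into each interval $[n,n+1]$), and both proofs read the root word along $\mathbb{Z}$, obtain descendants by midpoint insertion, and color via Theorem \ref{Theorem Kundgen-Pelsmajer}. The only difference is that you spell out the exhaustion-by-finite-outerplanar-pieces compactness step, which the paper leaves implicit.
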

\begin{proof}
Consider the graph $G$ on the set of all integers which is a countable union of copies of the graph $D$ inserted in every interval $[n,n+1]$, for every integer $n$. By Theorem \ref{Theorem Kundgen-Pelsmajer} the~graph $G$ has a square-free coloring $c$ using at most $12$ colors. As the root for the constructed tree one may take the doubly infinite word$$R=\cdots c(-3)c(-2)c(-1)c(0)c(1)c(2)\cdots.$$
This word is clearly bifurcate and the whole assertion follows similarly as in the previous proof.
\end{proof}
On the other hand, it is not difficult to establish that the above results are no longer true over alphabet of size four.
\begin{theorem}\label{Theorem Bifurcate Tree Lower Bound}
	There is no complete bifurcate tree with words of length more than $6$ over a~$4$-letter alphabet.
\end{theorem}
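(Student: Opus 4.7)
The plan is to argue by contradiction. Suppose a complete bifurcate tree $\mathbb{B}$ over a 4-letter alphabet $\mathcal{A}$ contains some word of length greater than 6. Following ancestors in $\mathbb{B}$ back toward the root (a single letter), we obtain a word $W \in \mathbb{B}$ of length exactly 7. Since $\mathbb{B}$ is complete, $W$ must have a bifurcate descendant at each of the eight positions $i \in \{0, 1, \ldots, 7\}$: for every such $i$ there exists a letter $\mathtt{x} \in \mathcal{A}$ for which $P_i(W)\mathtt{x}S_{7-i}(W)$ is bifurcate. The theorem therefore reduces to proving that no bifurcate quaternary word of length 7 can satisfy this condition.

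To establish the reduction, I would enumerate the bifurcate quaternary words of length 7 up to permutations of $\mathcal{A}$. The square-free condition already trims the search space drastically, and the bifurcate requirement further restricts it; by Theorem~\ref{Theorem Steady-Bifurcate}, the steady words counted in Table~\ref{T1n} embed into this list, so a short backtracking procedure yields a manageable collection of candidates. For each candidate $W$, the goal is to identify a critical position $i^\star$ such that every insertion $P_{i^\star}(W)\mathtt{x}S_{7-i^\star}(W)$ with $\mathtt{x}\in\mathcal{A}$ fails to be bifurcate.

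A failure at $i^\star$ falls into one of two categories. Either (a) the insertion creates a square in the resulting length-8 word, typically a short one involving $\mathtt{x}$ together with letters already present in a bounded window around $i^\star$, an ``immediate'' failure; or (b) the extension is square-free but itself not bifurcate, meaning some position of the length-8 word admits no square-free length-9 extension, a ``one-step'' failure requiring a second-level check. Immediate failures are detected by inspecting at most a handful of neighbouring letters, while one-step failures demand a nested verification on a word of length 8, which is still finite and small.

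The main obstacle is the bookkeeping of this nested case analysis, especially the one-step failures, where one must look one layer deeper into the prospective tree. In practice the work is reduced by choosing, for each candidate $W$, a position $i^\star$ whose neighbourhood is so constrained that three of the four letters already yield immediate squares, leaving only a single letter to be ruled out by a one-step argument. The whole verification is finitary and elementary, and can be carried out either by a careful hand-written case split exploiting the symmetry under permutations of $\mathcal{A}$ or by a short computer enumeration. Once completed, it shows that no length-7 bifurcate word over $\mathcal{A}$ admits bifurcate extensions at all eight positions, and hence no word of length greater than 6 can occur in a complete bifurcate tree over $\mathcal{A}$.
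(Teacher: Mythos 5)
Your proposal does not follow the paper's argument, and as it stands it has a genuine gap: it is a plan for a finite verification rather than a proof, and the finite statement it reduces to is both unverified and almost certainly the wrong target. You replace the theorem by the claim that \emph{no} quaternary bifurcate word $W$ of length $7$ admits, at every one of its $8$ positions, an extension that is again bifurcate, and you propose to check this by enumeration; but no enumeration is actually carried out, and nothing supports the claim beyond the hope that a ``critical position'' exists for each candidate. Worse, this claim is much stronger than the theorem and is very dubious: over a $4$-letter alphabet bifurcate words abound (every steady word is bifurcate by Theorem~\ref{Theorem Steady-Bifurcate}, and quaternary steady words of every length exist by Theorem~\ref{Theorem Steady 4}), and failing to be bifurcate at a given position requires all four letters to be blocked there, which forces very special structure (such as the juxtaposition of two maximal binary square-free blocks, as in $\mathtt{abacdc}$). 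There is no reason why an arbitrary bifurcate word of length $7$ --- stripped, as in your reduction, of all information about how it was reached from the root --- should fail such a two-level local test; if even one candidate passes it, your contradiction evaporates, and your method gives no bound on how much deeper one would then have to search (completeness is an infinite condition; a compactness argument guarantees some finite witness depth, but you have not shown that depth $2$ beyond length $7$ suffices).

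The paper's proof is instead a short structural argument that uses precisely the ancestry your reduction discards. Assuming (after relabelling) that $\mathtt{ac}$ lies in the tree, the forced extension at the middle position is $\mathtt{abc}$ or $\mathtt{adc}$, and from then on the word splits as $XY$ with $X$ over $\{\mathtt{a},\mathtt{b}\}$ and $Y$ over $\{\mathtt{c},\mathtt{d}\}$; inserting any letter at the position separating $X$ from $Y$ preserves this split, so always following the child at that separating position keeps $X$ and $Y$ square-free binary words, hence each of length at most $3$. That branch therefore cannot pass length $6$, and once $|X|=|Y|=3$ no square-free insertion exists at the separating position at all, contradicting completeness. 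If you wish to retain a finitary, enumerative flavour, you must at least keep this kind of structural information about which words can actually occur below a single-letter root; the reduction to arbitrary bifurcate words of length $7$ together with an unperformed two-level check does not prove the theorem.
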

\begin{proof}Let $\mathcal{A}=\{\mathtt{a,b,c,d}\}$ and suppose that $\mathbb{B}$ is a complete bifurcate tree over $\mathcal{A}$. We may assume that $\mathtt{ac}$ is a word in $\mathbb{B}$. Then an extension of $\mathtt{ac}$ in the middle is either $\mathtt{abc}$ or $\mathtt{adc}$. Notice that each of these words can be factorized as $XY$ such that $X$ is a word over the~alphabet $\{\mathtt{a,b}\}$ and $Y$ is a word over the alphabet $\{\mathtt{c,d}\}$. This property will be preserved in every further extension at the position separating $X$ form $Y$. So, the longest possible square-free word in $\mathbb{B}$ has the form $\mathtt{abacdc}$, which proves the assertion.
	\end{proof}
\section{Final remarks}
Let us conclude the paper with some suggestions for future research.

First notice that the assertion of Theorem \ref{Theorem Kundgen-Pelsmajer} is actually much stronger than needed for deriving conclusions on bifurcate trees. Indeed, the $12$-coloring it provides is square-free on all possible paths while for our purposes it is sufficient to consider only directed paths going always to the right. More formally, let $D^*$ denote the directed graph obtained from $D$ by orienting every edge to the right (towards the larger number).
\begin{problem}
	Determine the least possible $k$ such that there is a $k$-coloring of $D^*$ in which all directed paths are square-free.
\end{problem}
By Theorem \ref{Theorem Kundgen-Pelsmajer} we know that $k\leqslant 12$, but most probably this is not the best possible bound. Clearly, any improvement for the constant $k$ would give an improvement in statements of Theorems \ref{Theorem Bifurcate Tree} and \ref{Theorem Bifurcate Tree Infinite}. Therefore, by Theorem \ref{Theorem Bifurcate Tree Lower Bound} we know that $k\geqslant 5$.

Notice that the family of graphs $D_n$ we used in the proof of Theorem \ref{Theorem Bifurcate Tree} is actually a quite restricted subclass of planar graphs, which in turn is just one of the \emph{minor-closed} classes of graphs. It has been recently proved by Dujmović, Esperet, Joret, Walczak, and Wood \cite{DujmovicII} that every such class (except the class of all finite graphs) has bounded square-free chromatic number. In particular, every planar graph has a square-free coloring using at most $768$ colors. Perhaps these results could be used to derive other interesting properties of words. For such applications it is sufficient to restrict to \emph{oriented} planar graphs, that is, directed graphs arising from simple planar graphs by fixing for every edge one of the two possible orientations.

\begin{problem}
	Determine the least possible $k$ such that there is a $k$-coloring of any oriented planar graph in which all directed paths are square-free.
\end{problem}

Finally, let us mention of another striking connection between words and graph colorings. Let $n\geqslant 0$ be fixed, and consider all possible proper vertex $4$-colorings of the graph $D_n$. Identifying colors with letters, one may think of these colorings as of words over a $4$-letter alphabet. Let us denote this set by $\mathbb{D}_n$. Clearly, every word in $\mathbb{D}_n$ has length equal to $2^{n}+1$ --- the number of vertices in the graph $D_n$.

Let $W$ be any word of length $N$ and let $A$ be any subset of $\{1,2,\ldots,N\}$. Denote by $W_A$ the subword of $W$ along the set of indices $A$. The following statement is a simple consequence of the celebrated \emph{Four Color Theorem} (see \cite{JensenToft}, \cite{Thomas}).

\begin{theorem}
	For every pair of positive integers $n\leqslant N$ and any set of positive integers $A$, with $|A|=2^n+1$ and $\max A\leqslant 2^N+1$, there exists a word $W\in \mathbb{D}_N$ such that $W_A\in \mathbb{D}_n$.
\end{theorem}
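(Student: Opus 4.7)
The plan is to recast the statement as a graph colouring problem and then apply the Four Color Theorem to a suitably built auxiliary planar graph. Identify the positions $1,2,\ldots,2^N+1$ of a length-$(2^N+1)$ word with the vertex set $V_N$ of $D_N$ via the order-preserving bijection $k \leftrightarrow (k-1)/2^N$; under this identification $\mathbb{D}_N$ is precisely the set of proper $4$-colourings of $D_N$. Writing $A=\{a_1<a_2<\cdots<a_{2^n+1}\}$, the subword $W_A$ lies in $\mathbb{D}_n$ if and only if the map $(i-1)/2^n \mapsto W_{a_i}$ is a proper colouring of $D_n$.

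I would then build an auxiliary simple graph $H$ on the vertex set $\{1,2,\ldots,2^N+1\}$ with two families of edges. The \emph{upper} edges are the images of the edges of $D_N$ under the identification above. The \emph{lower} edges are, for every pair $i<j$ with $(i-1)/2^n$ and $(j-1)/2^n$ adjacent in $D_n$, the pair $\{a_i,a_j\}$. A proper $4$-colouring of $H$ automatically yields a word $W\in\mathbb{D}_N$ with $W_A\in\mathbb{D}_n$, so it suffices to show $\chi(H)\leqslant 4$, which I would do by verifying that $H$ is planar and invoking the Four Color Theorem.

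For planarity, place the vertices $1,2,\ldots,2^N+1$ on a horizontal line in this order and route the upper edges as arcs in the upper half-plane and the lower edges as arcs in the lower half-plane. The crucial observation is that each edge of $D_N$ corresponds to a dyadic interval $[m/2^j,(m+1)/2^j]$, and any two such intervals are either disjoint (possibly sharing an endpoint) or nested; hence the upper arcs form a non-crossing, \emph{laminar} family. The same laminarity holds for edges of $D_n$, and since $A$ is listed in increasing order, the map $i \mapsto a_i$ preserves betweenness, so the lower arcs are non-crossing as well. Consequently $H$ admits a two-page book embedding and is in particular planar.

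The main conceptual hurdle is precisely this planarity step: one must see that the two layers of constraints can be drawn simultaneously without crossings, which reduces to the laminar structure of dyadic intervals. Once planarity of $H$ is established, the Four Color Theorem produces a proper $4$-colouring of $H$, and this colouring, read off along the positions $1,2,\ldots,2^N+1$, is the required word $W$. Minor bookkeeping --- discarding any duplicated edges so that $H$ is simple, and observing that every adjacency constraint required for $W\in\mathbb{D}_N$ and for $W_A\in\mathbb{D}_n$ is encoded as an edge of $H$ --- completes the argument.
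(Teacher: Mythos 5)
Your argument is correct and is essentially the intended one: the paper states this theorem without a written proof, calling it ``a simple consequence of the celebrated Four Color Theorem,'' and the natural way to realize that is exactly your overlay construction --- place the positions $1,\dots,2^N+1$ on a line, draw the edges of $D_N$ above it and a copy of $D_n$ on the positions indexed by $A$ below it, use the laminar (nested-or-disjoint) structure of dyadic intervals, preserved by the increasing map $i\mapsto a_i$, to see both pages are crossing-free, and then $4$-color the resulting planar graph $H$. Nothing is missing; the planarity verification you single out is indeed the only substantive step, and your treatment of it is correct.
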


What is more surprising is that this statement is actually equivalent to the Four Color Theorem, as proved by Descartes and Descartes \cite{Descartes} (see \cite{JensenToft}) Perhaps one could prove it directly, without refereeing to graph coloring and without huge computer verifications.

\end{document}